\documentclass{article}%
\usepackage{amsmath}
\usepackage{amsfonts}
\usepackage{amssymb}
\usepackage{graphicx}%
\setcounter{MaxMatrixCols}{30}
\providecommand{\U}[1]{\protect\rule{.1in}{.1in}}
\newtheorem{theorem}{Theorem}[section]

\newtheorem{corollary}[theorem]{Corollary}

\newtheorem{definition}[theorem]{Definition}
\newtheorem{example}[theorem]{Example}

\newtheorem{lemma}[theorem]{Lemma}

\newtheorem{proposition}[theorem]{Proposition}
\newtheorem{remark}[theorem]{Remark}

\newenvironment{proof}[1][Proof]{\noindent\textbf{#1.} }{\ \rule{0.5em}{0.5em}}
\begin{document}

\title{ A note on the Dixmier-Moeglin Equivalence in Leavitt Path Algebras of
arbitrary graphs over a field}
\author{Kulumani M. Rangaswamy\\Department of Mathematics, University of Colorado, \\Colorado Springs, Colorado 80918, USA}
\date{}
\maketitle

\begin{abstract}
The Dixmier-Moeglin Equivalence (for short, the DM-equivalence) is the
equivalence of three distinguishing properties of prime ideals in a
non-commutative algebra A. These properties are of (i) being primitive, (ii)
being rational and (iii) being locally closed in the Zariski \ topology of
Spec(A). The DM-equivalence holds in many interesting algebras over a field.
Recently, it was shown that the prime ideals of a Leavitt path algebra of a
finite graph satisfy the DM-equivalence, In this note, we investigate the
occurrence of the DM-equivalence in a Leavitt path algebra L of\ an arbitrary
directed graph E. Our analysis shows that locally closed prime ideals of L
satisfy interesting equivalent properties such as being strongly primitive and
completely irreducible. Examples illustrate the results and the constraints.

\end{abstract}

\section{\bigskip Introduction}

In the study of several classes of non-commutative algebras, an important but
often difficult question is to classify the irreducible representations of an
algebra and this question is usually reduced to characterizing the primitive
ideals which are the kernels of these representations. Towards this goal, it
is often necessary to identify the primitive ideals among the prime ideals of
these algebras. A fundamental theorem in this connection was proved by J.
Dixmier \cite{D} and C, Moeglin \cite{MO}: If $U$ is the enveloping algebra of
a finite dimensional complex Lie algebra, and $P$ is a prime ideal of $U$,
then $P$ is primitive if and only if it is rational, if and only if $P$ is a
locally closed point in the prime spectrum of $U$ under the Zariski topology.
Here, a prime ideal $P$ of an algebra $A$ over a field $K$ is said to be
\textit{rational} if the center of the Martindale ring of quotients of prime
algebra $A/P$ -- called the extended centroid of $A/P$ -- is an algebraic
extension of $K$. Likewise, an equivalent statement for a prime ideal $P$ to
be \textit{locally closed} in the algebra $A$ is that the intersection $I$ of
all the prime ideals of $A$ properly containing $P$ strictly contains $P$. An
algebra $A$ over a field $K$ is said to satisfy the \textit{Dixmier-Moeglin
Equivalence }(for short, DM-equivalence) on prime ideals, if for any prime
ideal $P$ of $A$, the three properties in the Dixmier-Moeglin theorem are
equivalent. The D-M equivalence seems to hold in a number of interesting
classes of algebras. Just to name a few, Goodearl and Letzter \cite{GL}
established the DM-equivalence in Quantum coordinate rings and also in
quantized Weyl algebras, while Bell, Rogalski and Sierra (\cite{BRS}) proved
the DM-equivalence for prime ideals in the Twisted homogeneous coordinate rings.

Inspired by the above investigations, it was shown in \cite{ABR-1} that if $E$
is a finite graph and $K$ is a field, then the DM-equivalence for prime ideals
always holds in the Leavitt path algebra $L_{K}(E)$. The aim of this note is
to investigate the occurrence of the DM-equivalence in the case of a Leavitt
path algebra $L:=L_{K}(E)$ of an arbitrary graph $E$. We begin by obtaining
several characterizations of a locally closed prime ideal $P$ of $L$. Such an
ideal $P$, in addition to being primitive, has the interesting property of
being Completely irreducible, a type of ideals that play an important role in
the multiplicative ideal theory of non-noetherian commutative rings (see
definition below and (\cite{FHO}). This leads to $P$ possessing a property
stronger than being primitive which we call strong primitivity. \ It is shown
that, in the case of a finite graph $E$, the primitive ideals of $L_{K}(E)$
are always strongly primitive, but this does not hold if $E$ is an infinite
graph. We also establish that a locally closed prime ideal $P$ of $L$ is
always rational. The converse holds if $P$ is a non-graded prime ideal or a
graded prime ideal of the form $P=I(H,B_{H}\backslash\{u\})$. But an example
of a countably infinite graph $E$ shows that the corresponding Leavitt path
algebra $L_{%
\mathbb{C}
}(E)$ over the field $%
\mathbb{C}
$ of complex numbers possesses many graded prime ideals which are rational but
not locally closed.

\section{Preliminaries}

For the general notation, terminology and results in Leavitt path algebras, we
refer to \cite{AAS}. We give below a short outline of some of the needed basic
concepts and results.

A (directed) graph $E=(E^{0},E^{1},r,s)$ consists of two sets $E^{0}$ and
$E^{1}$ together with maps $r,s:E^{1}\rightarrow E^{0}$. The elements of
$E^{0}$ are called \textit{vertices} and the elements of $E^{1}$
\textit{edges}. All the graphs $E$ that we consider (excepting when
specifically stated) are arbitrary in the sense that no restriction is placed
either on the number of vertices in $E$ or on the number of edges emitted by a
single vertex.

A vertex $v$ is called a \textit{sink} if it emits no edges and a vertex $v$
is called a \textit{regular} \textit{vertex} if it emits a non-empty finite
set of edges. An \textit{infinite emitter} is a vertex which emits infinitely
many edges. A graph without infinite emitters is said to be
\textit{row-finite}. For each $e\in E^{1}$, we call $e^{\ast}$ a \textit{ghost
edge}. We let $r(e^{\ast})$ denote $s(e)$, and we let $s(e^{\ast})$ denote
$r(e)$. A\textit{\ path} $\mu$ of length $n>0$ is a finite sequence of edges
$\mu=e_{1}e_{2}\cdot\cdot\cdot e_{n}$ with $r(e_{i})=s(e_{i+1})$ for all
$i=1,\cdot\cdot\cdot,n-1$. In this case $\mu^{\ast}=e_{n}^{\ast}\cdot
\cdot\cdot e_{2}^{\ast}e_{1}^{\ast}$ is the corresponding \textit{ghost path}.
A vertex is considered a path of length $0$. The set of all vertices on the
path $\mu$ is denoted by $\mu^{0}$.

A path $\mu$ $=e\cdot\cdot\cdot e_{n}$ in $E$ is \textit{closed} if
$r(e_{n})=s(e_{1})$, in which case $\mu$ is said to be based at the vertex
$s(e_{1})$. A closed path $\mu$ as above is called \textit{simple} provided it
does not pass through its base more than once, i.e., $s(e_{i})\neq s(e_{1})$
for all $i=2,...,n$. The closed path $\mu$ is called a \textit{cycle} if it
does not pass through any of its vertices twice, that is, if $s(e_{i})\neq
s(e_{j})$ for every $i\neq j$. An \textit{exit} for a path $\mu$ $=e_{1}%
\cdot\cdot\cdot e_{n}$ is an edge $f$ that satisfies $s(f)=s(e_{i})$ for some
$i$ but $f\neq e_{i}$. The graph $E$ is said to satisfy \textit{Condition
(L)}, if every cycle in $E$ has an exit in $E$. The graph $E$ is said to
satisfy\textit{\ Condition (K),} if any vertex on a closed path $\mu$ is also
the base for a closed path $\gamma$ different from $\mu$.

If there is a path from vertex $u$ to a vertex $v$, we write $u\geq v$. A
non-empty subset $D$ of vertices is said to be \textit{downward directed }\ if
for any $u,v\in D$, there exists a $w\in D$ such that $u\geq w$ and $v\geq w$.
A subset $H$ of $E^{0}$ is called \textit{hereditary} if, whenever $v\in H$
and $w\in E^{0}$ satisfy $v\geq w$, then $w\in H$. A hereditary set is
\textit{saturated} if, for any regular vertex $v$ in $E^{0}$, $r(s^{-1}%
(v))\subseteq H$ implies $v\in H$.

\begin{definition}
Given an arbitrary graph $E$ and a field $K$, the \textit{Leavitt path algebra
}$L_{K}(E)$ is defined to be the $K$-algebra generated by a set $\{v:v\in
E^{0}\}$ of pair-wise orthogonal idempotents together with a set of variables
$\{e,e^{\ast}:e\in E^{1}\}$ which satisfy the following conditions:

(1) \ $s(e)e=e=er(e)$ for all $e\in E^{1}$.

(2) $r(e)e^{\ast}=e^{\ast}=e^{\ast}s(e)$\ for all $e\in E^{1}$.

(3) (The "CK-1 relations") For all $e,f\in E^{1}$, $e^{\ast}e=r(e)$ and
$e^{\ast}f=0$ if $e\neq f$.

(4) (The "CK-2 relations") For every regular vertex $v\in E^{0}$,
\[
v=\sum_{e\in E^{1},s(e)=v}ee^{\ast}.
\]

\end{definition}

Every Leavitt path algebra $L_{K}(E)$ is a $%
\mathbb{Z}
$\textit{-graded algebra}, namely, $L_{K}(E)=%
{\displaystyle\bigoplus\limits_{n\in\mathbb{Z}}}
L_{n}$ induced by defining, for all $v\in E^{0}$ and $e\in E^{1}$, $\deg
(v)=0$, $\deg(e)=1$, $\deg(e^{\ast})=-1$. Here, for each $n\in%
\mathbb{Z}
$, the homogeneous component $L_{n}$ is given by
\[
L_{n}=\{%
{\textstyle\sum}
k_{i}\alpha_{i}\beta_{i}^{\ast}\in L:\text{ }|\alpha_{i}|-|\beta_{i}|=n\}.
\]
Further, the $L_{n}$ are abelian subgroups satisfying $L_{m}L_{n}\subseteq
L_{m+n}$ for all $m,n\in%
\mathbb{Z}
$. An ideal $I$ of $L_{K}(E)$ is said to be a \textit{graded ideal} if $I=$ $%
{\displaystyle\bigoplus\limits_{n\in\mathbb{Z}}}
(I\cap L_{n})$. Equivalently, if $a\in I$ and $a=a_{i_{1}}+\cdot\cdot
\cdot+a_{i_{m}}$ is a graded sum with $a_{i_{k}}\in L_{i_{k}}$ for all
$k=1,\cdot\cdot\cdot,m$, then $a_{i_{k}}\in I$ for all $k$.

\bigskip

A \textit{breaking vertex }of a hereditary saturated subset $H$ is an infinite
emitter $w\in E^{0}\backslash H$ with the property that $0<|s^{-1}(w)\cap
r^{-1}(E^{0}\backslash H)|<\infty$. The set of all breaking vertices of $H$ is
denoted by $B_{H}$. For any $v\in B_{H}$, $v^{H}$ denotes the element
$v-\sum_{s(e)=v,r(e)\notin H}ee^{\ast}$. Given a hereditary saturated subset
$H$ and a subset $S\subseteq B_{H}$, $(H,S)$ is called an \textit{admissible
pair.} Given an admissible pair $(H,S)$, the ideal generated by $H\cup
\{v^{H}:v\in S\}$ is denoted by $I(H,S)$. It was shown in \cite{T} that the
graded ideals of $L_{K}(E)$ are precisely the ideals of the form $I(H,S)$ for
some admissible pair $(H,S)$. Moreover, $L_{K}(E)/I(H,S)\cong L_{K}%
(E\backslash(H,S))$. Here $E\backslash(H,S)$ is a \textit{quotient graph of
}$E$ where\textit{\ }%
\[
(E\backslash(H,S))^{0}=(E^{0}\backslash H)\cup\{v^{\prime}:v\in B_{H}%
\backslash S\}
\]
and
\[
(E\backslash(H,S))^{1}=\{e\in E^{1}:r(e)\notin H\}\cup\{e^{\prime}:e\in
E^{1}\text{ with }r(e)\in B_{H}\backslash S\}
\]
and $r,s$ are extended to $(E\backslash(H,S))^{0}$ by setting $s(e^{\prime
})=s(e)$ and $r(e^{\prime})=r(e)^{\prime}$.

We will also be using the fact that the Jacobson radical (and in particular,
the prime/Baer radical) of $L_{K}(E)$ is always zero (see \cite{AAS}).

The following statement will be useful in our investigation.

\begin{theorem}
\label{Prime ideals of LPA}(Theorem 3.12, \cite{R-1}) Let $P$ be an ideal of
$L_{K}(E)$ with $I\cap E^{0}=H$. Then $P$ is a prime ideal if and only if $P$
satisfies one of the following conditions:
\end{theorem}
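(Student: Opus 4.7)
The plan is to combine Tomforde's structure theorem for graded ideals with an analysis of the hereditary saturated set $H=P\cap E^{0}$ and the corresponding quotient graph. Since $P$ is an ideal, $H$ is automatically hereditary and saturated, and $I(H,\emptyset)\subseteq P$. The idea is to reduce, whenever convenient, to the quotient $L_{K}(E)/I(H,B_{H})\cong L_{K}(E\setminus(H,B_{H}))$, which removes the part of $P$ controlled by $H$, and then analyze the graded and non-graded cases separately inside that quotient.

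For the ``only if" direction, the first step is to prove that $M:=E^{0}\setminus H$ is downward directed. If it were not, one could pick $u,v\in M$ with no common descendant in $M$; then the ideals generated by the hereditary saturated closures of $\{u\}$ and $\{v\}$ would have product lying inside $P$ while neither itself lies in $P$, contradicting the primeness of $P$. Next, I would split the argument according as $P$ is graded or non-graded. If $P$ is graded, then by Tomforde $P=I(H,S)$ for some $S\subseteq B_{H}$, and the combination of primeness with the downward direction of $M$ forces $S$ to be either all of $B_{H}$ or of the form $B_{H}\setminus\{u\}$ for a single breaking vertex $u\in B_{H}$. If $P$ is non-graded, then the image of $P$ modulo $I(H,B_{H})$ is a non-graded prime ideal of $L_{K}(E\setminus(H,B_{H}))$ whose intersection with the vertex set is empty; I would then invoke the known correspondence between such prime ideals and cycles without exits, producing a unique cycle $c$ without exits in the quotient graph together with an irreducible polynomial $p(x)\in K[x]$, $p(x)\neq x$, such that $p(c)$ generates the remaining piece of $P$.

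For the ``if" direction, one verifies that each prescribed form yields a prime ideal by passing to the appropriate quotient. In the graded cases the quotient Leavitt path algebra is built over a graph whose vertex set is downward directed, and such Leavitt path algebras are known to be prime. In the non-graded case the unique no-exit cycle $c$ gives rise to a corner isomorphic to a Laurent polynomial ring $K[c,c^{-1}]$, and primeness follows by combining this identification of the corner with the downward-directed condition on the ambient quotient graph and the irreducibility of $p(x)$.

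The main obstacle is the non-graded case. Showing that a non-graded prime ideal is forced to be controlled by a unique cycle without exits requires a delicate ``line-point and cycle" analysis and the identification of a polynomial subring $K[x]$ sitting inside $L_{K}(E)$ via the cycle; conversely, verifying that $I(H,B_{H})$ enlarged by $p(c)$ is actually prime depends on $c$ having no exits in the quotient graph and on $p(x)$ being irreducible. Once these ingredients are in place, knitting the graded structure theorem together with this non-graded analysis delivers the claimed equivalence.
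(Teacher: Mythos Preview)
The paper does not give its own proof of this theorem: it is quoted verbatim as Theorem~3.12 of \cite{R-1} and used as a background result. So there is no in-paper argument to compare against; the relevant comparison is with the original proof in \cite{R-1}, and your outline is essentially that proof. You correctly reduce to the quotient graph, establish downward directedness of $E^{0}\setminus H$ by a product-of-ideals contradiction, split into graded versus non-graded, and in the non-graded case locate the controlling no-exit cycle and irreducible polynomial via the Laurent-polynomial corner.

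Two small points worth tightening. First, in case (ii) the theorem records not only that $S=B_{H}\setminus\{u\}$ but also that $v\geq u$ for every $v\in E^{0}\setminus H$; your sketch says primeness forces $S$ to have this shape but does not mention deriving (or, in the converse direction, using) this extra reachability condition on $u$, which is what actually makes the quotient graph $E\setminus(H,B_{H}\setminus\{u\})$ downward directed via the sink $u'$. Second, in your downward-directedness argument you want the ideals generated by the tree sets $T(u)$ and $T(v)$ (hereditary closures), not their saturated closures: taking saturated closures can enlarge the sets enough to spoil the ``neither contained in $P$'' half of the contradiction. With these adjustments your plan matches the standard proof.
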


\begin{theorem}
(i) \ $\ P=I(H,B_{H})$ such that $E^{0}\backslash H$ is downward directed;

(ii) $\ P=I(H,B_{H}\backslash\{u\})$ where $u\in B_{H}$ and $v\geq u$ for all
$v\in E^{0}\backslash H$;

(iii) $P=I(H,B_{H})+<p(c)>$ where $c$ is a cycle without exits in
$E^{0}\backslash H$, $E^{0}\backslash H$ is downward directed and $p(x)$ is an
irreducible polynomial in $K[x,x^{-1}]$.
\end{theorem}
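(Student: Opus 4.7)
The plan is to split the proof according to whether $P$ is graded or non-graded. First I would establish that $H=P\cap E^{0}$ is necessarily hereditary and saturated (standard for ideals in Leavitt path algebras), and consider the largest graded ideal $P_{\mathrm{gr}}$ contained in $P$. Since the graded ideals of $L_{K}(E)$ are exactly the $I(H,S)$ for admissible pairs $(H,S)$, we have $P_{\mathrm{gr}}=I(H,S)$ for some $S\subseteq B_{H}$. Using the isomorphism $L_{K}(E)/I(H,S)\cong L_{K}(E\setminus(H,S))$, I would replace the ambient graph by the quotient graph and focus on how $P/P_{\mathrm{gr}}$ sits inside the resulting Leavitt path algebra.

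For the graded case $P=I(H,S)$, the quotient $L_{K}(E)/P\cong L_{K}(E\setminus(H,S))$ must itself be prime. I would invoke the well-known criterion that a Leavitt path algebra is prime iff its vertex set is downward directed, and observe that each primed vertex $v'$ with $v\in B_{H}\setminus S$ is a sink in the quotient graph. Two distinct primed sinks cannot share a common lower bound, forcing $|B_{H}\setminus S|\leq 1$. The subcase $S=B_{H}$ produces case (i); the subcase $B_{H}\setminus S=\{u\}$ translates the downward directedness of the quotient vertex set back into the condition $v\geq u$ for all $v\in E^{0}\setminus H$, yielding case (ii).

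For the non-graded case, I would first argue that $P_{\mathrm{gr}}$ must be of type (i): type (ii) would leave behind a primed sink that, in the quotient, obstructs the existence of a strictly larger prime extension. So we may pass to $F=E\setminus(H,B_{H})$ and study a nonzero non-graded prime $\overline{P}$ of $L_{K}(F)$ whose intersection with $F^{0}$ is empty. Here the key input is the structural fact that any nonzero non-graded ideal of a Leavitt path algebra must meet the subalgebra generated by some cycle without exits; combined with downward directedness of $F^{0}$, this pinpoints a unique such cycle $c$ in $F$. The corner of $L_{K}(F)$ at the vertex $s(c)$ contains a copy of the Laurent polynomial ring $K[x,x^{-1}]$ (generated by $c$ and $c^{\ast}$), and via a Morita-type correspondence the nonzero non-graded prime ideals of $L_{K}(F)$ correspond bijectively with prime ideals $\langle p(x)\rangle$ for $p(x)$ irreducible in $K[x,x^{-1}]$. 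Lifting back gives $P=I(H,B_{H})+\langle p(c)\rangle$, which is case (iii). Finally, for the converse direction I would verify directly for each of (i)--(iii) that the displayed ideal is prime, using the quotient-graph isomorphism for the graded cases and the Laurent polynomial identification for case (iii).

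The hardest step is the non-graded case, where three pieces must be assembled: the existence of an exit-less cycle in $F$ (a condition-(L)/(K)-style analysis combined with downward directedness), the explicit identification of the corner algebra at a vertex of the cycle with $K[x,x^{-1}]$, and the transfer of non-graded prime ideals between this corner and the full Leavitt path algebra, which is delicate because $L_{K}(F)$ need not be unital.
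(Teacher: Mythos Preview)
The paper does not prove this statement; it is quoted verbatim as Theorem~3.12 of \cite{R-1} and used only as a background structural input. There is therefore no proof in the present paper against which to compare your proposal.

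That said, your outline tracks the original argument in \cite{R-1} quite closely: the graded/non-graded dichotomy via the largest graded subideal $P_{\mathrm{gr}}=I(H,S)$, the passage to $L_{K}(E\setminus(H,S))$, the observation that at most one breaking vertex can be omitted from $S$ (two distinct primed sinks in a downward-directed quotient graph are impossible), and in the non-graded case the appearance of a cycle $c$ without exits together with the identification of the ideal generated by $c^{0}$ with $M_{\Lambda}(K[x,x^{-1}])$. One point you should sharpen: your reason for excluding type (ii) as the graded part in the non-graded case is stated too loosely. The precise obstruction is that in a downward-directed graph containing a sink $u'$, every vertex connects to $u'$; hence there can be no cycle without exits (the vertex set of such a cycle is hereditary and cannot reach $u'$). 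By the contrapositive of Lemma~\ref{No vertices}, this forces every nonzero ideal of the quotient to contain a vertex, so $P/P_{\mathrm{gr}}$ would have to be zero, contradicting that $P$ is non-graded. With that correction your sketch is a faithful summary of how the cited theorem is actually proved.
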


\begin{definition}
Let $K$ be a field and let $A$ be a prime $K$-algebra. The Martindale ring of
quotients of $A$, denoted by $Frac(A)$ consists of equivalent classes of pairs
$(I,f)$ where $I$ is a non-zero ideal of $A$ and $f\in Hom_{A}(I_{A},A_{A})$.
Here two pairs $(I,f),(J,g)$ are considered equivalent if $f=g$ on the
intersection $I\cap J$. Addition and multiplication is given by
\[
(I,f)+(J,g)=(I\cap J,f+g)\text{ and }(I,f)\cdot(J,g)=(IJ,f\circ g)
\]
The center of $Frac(A)$ is called the \textit{extended centroid} of $A$ and is
denoted by $C(A)$. For each $a\in A$, let $\lambda_{a}$ be the map
$A\longrightarrow A$ given by $\lambda_{a}(x)=ax$. Then the map
$a\longrightarrow\lambda_{a}$ is an embedding of $A$ in $Frac(A)$.
\end{definition}

We will be using an internal characterization the extended centroid $C(A)$
given in\ \cite{BM}: $C(A)$ consists precisely of the equivalence classes
$[(I,f)]$, where $f:I\longrightarrow A$ is a $(A,A)$ bimodule homomorphism.
Thus%
\[
C(A)=\{(I,f):0\neq I\vartriangleleft A,f\in Hom(_{A}I_{A},_{A}A_{A})\}/\sim
\]
where $\sim$ is the equivalence relation described above.

\section{The Results}

In this section, we shall investigate the occurrence of the D-M equivalence
for prime ideals in a Leavitt path algebra $L=:L_{K}(E)$ of an arbitrary
directed graph $E$ over a field $K$. \ First, we obtain several
characterizations of a locally closed prime ideal $P$ of $L$. In particular,
such a prime ideal $P$ actually satisfies a property stronger than
primitivity. We call ideals with this property strongly primitive ideals.
Strongly primitive ideals in $L_{K}(E)$ have interesting property of being
completely irreducible. We show that, if $E$ is an arbitrary graph, a prime
ideal $P$ of $L_{K}(E)$ is locally closed if and only if $P$ is strongly
primitive. We also show that a strongly primitive ideal $P$ of $L_{K}(E)$ is
rational. The converse easily holds if the prime ideal $P$ is either a
non-graded ideal or a graded ideal of the form $P=I(H,B_{H}\backslash\{u\})$
where $H=P\cap E^{0}$ and $B_{H}$ is the set of breaking vertices
corresponding to $H$. However, by means of two examples, we show that a graded
rational prime ideal of $L_{K}(E)$ need not be strongly primitive and thus not
locally closed.

We begin by stating the following description of the primitive ideals of a
Leavitt path algebra. $L=:L_{K}(E)$ of an arbitrary graph $E$ over a field $K$.

\begin{theorem}
\label{Primitive ideals of LPAs} (Theorem 4.3, \cite{R-1}) Let $E$ be an
arbitrary graph and let $L:=L_{K}(E)$. An ideal $P$ of $L$ with graded part
$gr(P)=I(H,S)$ is primitive if and only if $P$ satisfies one of the following:

(a) $P$ is a non-graded prime ideal of the form $P=I(H,B_{H})+<p(c)>$where
$p(x)\in K(x)$\ is an irreducible polynomial and $c$ is a cycle without exits
in $E^{0}\backslash H$;

(b) $P$ is a graded prime ideal of the form $P=I(H,B_{H}\backslash\{u\}))$ for
some $u\in B_{H}$;

(c) $P$ is a graded prime ideal of the form $P=I(H,B_{H})$ and $E\backslash
(H,B_{H})^{0}=E^{0}\backslash H$ satisfies both Condition (L) and the CSP.
\end{theorem}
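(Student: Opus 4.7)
The plan is to combine the classification of prime ideals given in Theorem \ref{Prime ideals of LPA} with a direct module-theoretic construction: for each of the three types of prime ideals, I would either exhibit an explicit simple $L$-module whose annihilator equals $P$ (proving primitivity) or identify a structural obstruction. Since every primitive ideal is prime, the argument amounts to running through the types (i)--(iii) of Theorem \ref{Prime ideals of LPA} and deciding, within each type, precisely when $P$ is primitive.

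For type (iii) primes $P = I(H,B_H) + \langle p(c)\rangle$ with $c$ a cycle without exits in $E^0 \setminus H$, I would work in the quotient $L/I(H,B_H) \cong L_K(E\setminus(H,B_H))$, where $c$ remains an exitless cycle. The corner $cLc$ contains a copy of $K[x,x^{-1}]$ in which $p(c)$ generates a maximal ideal; a Chen-type simple module built from the infinite rational path $ccc\cdots$ and twisted by the one-dimensional $K[x,x^{-1}]/(p)$-representation has annihilator exactly $P$, yielding (a). For type (ii) primes $P = I(H, B_H \setminus \{u\})$, in the quotient graph the new vertex $u'$ sits below every remaining vertex, and the left $L$-module $Lu'/M$, where $M$ is the unique maximal submodule arising from paths into $u'$, is simple with annihilator equal to $P$, yielding (b).

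The main obstacle lies in type (i), where $P = I(H, B_H)$ with $E^0 \setminus H$ merely downward directed: here primitivity is not automatic, since $L/P$ must admit a faithful simple module. Condition (L) on the quotient graph is clearly necessary, because any exitless cycle in $E^0 \setminus H$ would force every simple $L/P$-module to be annihilated by a nonzero polynomial in that cycle, yielding a strictly larger annihilator. The countable separation property (CSP) is exactly the condition that enables a diagonal construction: one enumerates a countable collection of nonzero coset representatives of $L/P$ and uses CSP to produce a single vertex $v$ in $E^0 \setminus H$ below every vertex in the finite supports of these representatives; the simple module built at $v$ from a suitable infinite path (or branching system in the row-infinite setting) is then faithful over $L/P$, establishing sufficiency in (c).

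For necessity, I would argue by exhaustion: any prime ideal not of the forms in (a), (b), or (c) must be a type (i) prime in which either Condition (L) or CSP fails in $E^0 \setminus H$, and in either case the obstruction produces a nonzero ideal of $L/P$ lying inside the annihilator of every simple $L/P$-module, contradicting primitivity. The technically hardest step is the sufficiency in (c); organizing the infinite combinatorics of $E\setminus(H,B_H)$ via CSP to assemble a single faithful simple module is where the real content of the theorem resides, and where all of the row-finiteness simplifications used in the finite graph case must be replaced by a more delicate inductive construction.
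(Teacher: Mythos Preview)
The paper does not prove this statement: Theorem \ref{Primitive ideals of LPAs} is quoted verbatim from \cite{R-1} (Theorem 4.3 there) and is used as background input, with no argument supplied in the present paper. There is therefore no in-paper proof against which to compare your proposal.

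That said, your outline broadly tracks the strategy of the original proof in \cite{R-1}, and a couple of points deserve sharpening if you intend to carry it out. For the non-graded case (a), the cleanest route in \cite{R-1} is not a Chen-module construction but the observation that $L/P$ is Morita equivalent to $K[x,x^{-1}]/(p(x))$, a field, so $L/P$ is simple and hence primitive; your twisted-module sketch would also work but is more labor. For case (b), the quotient graph has $u'$ as a sink reached by every vertex, and one shows $L/P$ is primitive by exhibiting $L u'$ (or a suitable quotient thereof) as a faithful simple module; your description is essentially this.

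The part that needs the most care is case (c). Your account of \emph{necessity} of CSP is not quite the right mechanism: it is not that a fixed nonzero ideal sits inside every annihilator when CSP fails. Rather, assuming $P=I(H,B_H)$ is primitive, one fixes a faithful simple module $S$ and a nonzero $m\in S$, and then the countable set $\{v\in E^0\setminus H : vm\neq 0\}$ (enlarged along paths) witnesses CSP for $E^0\setminus H$; failure of CSP then contradicts faithfulness. For \emph{sufficiency}, the construction in \cite{R-1} does not enumerate coset representatives of $L/P$; it takes the countable set $S$ furnished by CSP, threads an infinite path through (a cofinal subsequence of) $S$ using downward directedness, and builds the associated simple module, which is faithful because every vertex connects into the path. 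Your ``diagonal'' intuition is right, but the object being enumerated is the CSP witness set, not elements of the algebra.
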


In statement (c) above, "CSP"\ means the countable separation property which
is defined below.

\begin{definition}
A graph $E$ is said to satisfy the countable separation property (for short,
the \textbf{CSP}), if there is a countable subset $S$ of $E^{0}$ such that,
for every $u\in E^{0}$ there is a $v\in S$ such that $u\geq v$, that is, there
is a path from $u$ to $v$.
\end{definition}

Influenced by Theorem \ref{Primitive ideals of LPAs}, we introduce the
following definition.

\begin{definition}
(i) A graph $E$ is said to satisfy the \textbf{strong CSP}, if $E^{0}$
satisfies the CSP with respect to a countable set $S$ such that $S$ is
contained in every non-empty hereditary saturated subset of $E^{0}$;

(ii) Given an arbitrary graph $E$, an ideal $P$ of $L_{K}(E)$ with graded part
$gr(P)=I(H,S)$ is said to be \textbf{strongly primitive} if the quotient graph
$E\backslash(H,S)$ is downward directed and satisfies both Condition (L) and
the strong CSP.
\end{definition}

\begin{remark}
\label{Str. primitive} Suppose (a) $P$ is a non-graded prime ideal of $L$, say
$P=I(H,B_{H})+<p(c)>$ with $c$ a no exit cycle in $E\backslash(H,B_{H})$ or
(b) $P$ is a graded prime ideal of the form $P=I(H,B_{H}\backslash\{u\})$.
Then $P$ is strongly primitive in both cases. To see this, note that in case
(a), by Theorem \ref{Primitive ideals of LPAs}, the quotient graph
$E\backslash(H,B_{H})$ is downward directed satisfies the strong CSP with
respect to $\{c^{0}\}$ and likewise in case (b), the quotient graph
$E\backslash(H,B_{H}\backslash\{u\})$ satisfies the strong CSP with respect to
the sink $u^{\prime}$ corresponding to $u$.
\end{remark}

\begin{definition}
(i) \ \ A prime ideal $P$ in a ring $R$ is said to be \textbf{locally closed}
if $P$ is not the intersection of all the prime ideals of $R$ properly
containing $P$.

(ii) A prime ideal $P$ in an algebra $R$ over a field $K$ is said to be
\textbf{rational} if the extended centroid of $R/P$ is an algebraic extension
of \ the field $K$.

(iii) (\cite{FHO}) \ An ideal $I$ in a ring $R$ is said to be
\textbf{completely irreducible} if $I$ is not the intersection of all the
ideals of $R$ properly containing $I$.
\end{definition}

We wish to show that in a Leavitt path algebra $L$, a prime ideal $P$ is
locally closed\
$<$
=
$>$
$P$ is completely irreducible
$<$
=
$>$
$P$ is strongly primitive. We shall be using the following three results.

\begin{lemma}
\label{Zorn ring} (Theorem 2.1, \cite{R}) An arbitrary graph $E$ satisfies
Condition (L) if and only if $L:=L_{K}(E)$ is a Zorn ring, that is non-zero
left/right ideal of $L$ contains a non-zero idempotent.
\end{lemma}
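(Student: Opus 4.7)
The plan is to prove both implications using the Reduction Theorem for Leavitt path algebras, which states that for any non-zero $a \in L_{K}(E)$ there exist paths $\mu,\nu$ in $E$ such that $\mu^{\ast} a \nu$ is either a non-zero scalar multiple $kv$ of some vertex $v$, or a non-zero Laurent polynomial $p(c)$ in a cycle $c$ without exits.

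For the forward direction, assume $E$ satisfies Condition (L) and let $I$ be a non-zero left ideal containing some $0 \neq a$. Since no cycle has empty exit set, the Reduction Theorem forces the first alternative, producing $\mu,\nu$ with $\mu^{\ast} a \nu = kv$ for some $k \in K^{\times}$ and $v = r(\mu) = r(\nu)$. Because $I$ is a left ideal, $\mu^{\ast} a \in L \cdot a \subseteq I$, so $e := k^{-1} \nu \mu^{\ast} a$ also lies in $I$. Using $\nu v = \nu$ (which holds since $r(\nu) = v$) and $\mu^{\ast} a \nu = kv$, a short computation gives
\[
e^{2} \;=\; k^{-2} \nu\mu^{\ast} a \nu \mu^{\ast} a \;=\; k^{-2}(\nu\mu^{\ast} a \nu)\mu^{\ast} a \;=\; k^{-2}(k\nu)\mu^{\ast} a \;=\; k^{-1}\nu\mu^{\ast} a \;=\; e,
\]
and $e = 0$ would force $0 = \nu\mu^{\ast} a\nu = k\nu$, contradicting $\nu \neq 0$. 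The right-ideal case is symmetric, with the idempotent $e' := k^{-1} a \nu \mu^{\ast} \in aL$.

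For the reverse direction, suppose $c$ is a cycle based at $v$ with no exit. Then the corner $vLv$ is isomorphic to $K[x,x^{-1}]$ via $c \leftrightarrow x$, and the proper ideal $(x-1)$ of $K[x,x^{-1}]$ contains no non-zero idempotent (since the only idempotents of $K[x,x^{-1}]$ are $0$ and $1$). To witness that $L$ fails to be a Zorn ring, I would consider the left ideal $J := L(c-v)$, which is non-zero because $c \neq v$ in $L$, and argue that any non-zero idempotent in $J$ would, via a Morita-style correspondence between the two-sided ideal $LvL$ and the corner $vLv \cong K[x,x^{-1}]$, give rise to a non-zero idempotent lying in $(x-1)$, a contradiction.

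The main obstacle I foresee is in this reverse direction: the forward implication is essentially an algebraic manipulation once the Reduction Theorem is invoked, but the converse requires precisely identifying a one-sided ideal of $L$ containing no non-zero idempotent and then carefully tracking how idempotents of $L$ compress to the commutative corner $vLv$. In particular, one must verify that the correspondence truly sends non-zero idempotents of $J$ to non-zero idempotents of the ideal $(x-1)$, rather than losing them under the compression step.
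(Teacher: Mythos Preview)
The paper does not prove this lemma; it is quoted as Theorem~2.1 of \cite{R} and used as a black box, so there is no proof in the paper to compare against.

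Your forward direction is correct and is the standard argument: under Condition~(L) the Reduction Theorem always lands in the vertex case, and $k^{-1}\nu\mu^{\ast}a$ (resp.\ $k^{-1}a\nu\mu^{\ast}$) is a non-zero idempotent in the given left (resp.\ right) ideal.

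For the reverse direction your instinct is right, and the obstacle you flag can be removed without any Morita machinery. Let $c$ be a cycle without exits based at $v$ and set $J=L(c-v)$. Since $(c-v)v=c-v$, every $e\in J$ satisfies $ev=e$. If $e\in J$ is idempotent, then
\[
(ve)^{2}=v(ev)e=v e\,e=ve^{2}=ve,
\]
so $ve$ is an idempotent in $vL(c-v)$. Because $v(c-v)=c-v$, one has $vL(c-v)=(vLv)(c-v)$, which under the isomorphism $vLv\cong K[x,x^{-1}]$, $c\mapsto x$, is exactly the ideal $(x-1)$. The only idempotent there is $0$, so $ve=0$. But then
\[
e=e^{2}=(ev)e=e(ve)=0.
\]
Thus $J$ is a non-zero left ideal with no non-zero idempotent, and $L$ is not a Zorn ring. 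The ``compression'' you worried about is simply left multiplication by $v$; it cannot kill a non-zero idempotent $e\in J$ precisely because $ev=e$ lets you recover $e=e(ve)$ from $ve$.
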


\begin{lemma}
\label{No vertices} (Lemma 3.5, \cite{R-1}) Let $E$ be a downward directed
graph and let $L:=L_{K}(E)$. If $A$ is a non-zero ideal of $L$ containing no
vertices, then there is a cycle $c$ without exits in $E$ and $A=<f(c)>$, where
$f(x)\in K[x]$ with a non-zero constant term.\bigskip
\end{lemma}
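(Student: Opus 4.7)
The plan is to combine three ingredients: the classification of graded ideals of $L$ via admissible pairs, the Reduction Theorem for Leavitt path algebras, and the principal ideal structure of the Laurent polynomial ring $K[x,x^{-1}]$.

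First I would show that $A$ cannot be graded. If $A$ were graded, then $A=I(H,S)$ with $H=A\cap E^{0}=\emptyset$; since no infinite emitter $w$ satisfies $|s^{-1}(w)|<\infty$, we have $B_{\emptyset}=\emptyset$, forcing $A=I(\emptyset,\emptyset)=0$, a contradiction. The same analysis applied to the graded part $gr(A)\subseteq A$ also gives $gr(A)=0$, a fact I will use later.

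Next, I would invoke the Reduction Theorem for Leavitt path algebras: for every $0\neq a\in L$, there exist $\mu,\nu\in L$ such that $\mu a\nu$ is either a nonzero scalar multiple $kw$ of a vertex, or a nonzero Laurent polynomial $p(c)$ in a cycle $c$ without exits in $E$. Applied to any $0\neq a\in A$, the first alternative is excluded since $A$ contains no vertices, so we obtain a cycle $c$ without exits in $E$ together with an element $p(c)\in A\setminus\{0\}$. Downward directedness then forces $c$ to be unique: two such cycles $c_{1},c_{2}$ would share a common descendant $w$ (via downward directedness applied to $s(c_{1})$ and $s(c_{2})$), and the no-exit condition forces $w\in c_{1}^{0}\cap c_{2}^{0}$, so the two cycles must coincide up to rotation.

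Setting $v=s(c)$, the no-exit property makes the corner $vLv$ isomorphic to $K[x,x^{-1}]$ via $c\leftrightarrow x$, $c^{\ast}\leftrightarrow x^{-1}$: every path starting at $v$ is forced onto $c$, so $vLv$ is the $K$-linear span of $\{c^{k}\}_{k\in\mathbb{Z}}$. Hence $A\cap vLv$ is a nonzero ideal of a PID; its generator can be rescaled by a unit $x^{-n}$ to an $f(x)\in K[x]$ with $f(0)\neq 0$, giving $\langle f(c)\rangle\subseteq A$.

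The main obstacle is the reverse inclusion $A\subseteq\langle f(c)\rangle$, since the Reduction Theorem only places the reductions $\mu a\nu$ (not $a$ itself) into $\langle f(c)\rangle$. I would close this gap by invoking the structure theorem for ideals of Leavitt path algebras, which decomposes any ideal as its graded part plus a sum $\sum\langle f_{c}(c)\rangle$ over no-exit cycles $c$ of the relevant quotient graph; since $gr(A)=0$ and $E$ admits a unique no-exit cycle, the decomposition collapses to $A=\langle f(c)\rangle$. Alternatively, one passes to the quotient $L/\langle f(c)\rangle$ and observes that the image of $A$ would be a nonzero ideal containing no vertices in a graph where the unique no-exit cycle has been killed, contradicting the existence half of the argument above.
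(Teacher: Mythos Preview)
The paper does not supply its own proof of this lemma; it is quoted from \cite{R-1} without argument, so there is no in-paper proof to compare against.  I can therefore only assess your outline on its own merits.

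Your argument is correct and standard through the inclusion $\langle f(c)\rangle\subseteq A$: the computation $gr(A)=0$, the appeal to the Reduction Theorem to produce a no-exit cycle $c$ with $0\neq p(c)\in A$, the uniqueness of $c$ via downward directedness, and the identification $vLv\cong K[x,x^{-1}]$ with $A\cap vLv=(f(c))$ are all sound.

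For the reverse inclusion $A\subseteq\langle f(c)\rangle$ you offer two routes, and both need attention.  Citing the general structure theorem for ideals of $L_{K}(E)$ (every ideal equals its graded part plus $\sum_{c}\langle f_{c}(c)\rangle$) does force $A=\langle f(c)\rangle$, but in \cite{R-1} and in most treatments that structure theorem is \emph{built on} precisely the statement you are proving: Lemma~3.5 is the engine that handles the non-graded part once $gr(I)$ has been factored out.  So this citation is circular unless you have an independent proof of the structure theorem in hand.

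Your alternative has a genuine gap.  The quotient $L/\langle f(c)\rangle$ is \emph{not} a Leavitt path algebra, because $\langle f(c)\rangle$ is not graded; there is no ``graph in which the unique no-exit cycle has been killed'', and the Reduction Theorem is unavailable in that quotient.  What actually needs to be shown directly is that, in a downward-directed graph with a unique no-exit cycle $c$ based at $v$, every ideal $I$ with $gr(I)=0$ satisfies $I=L\,(I\cap vLv)\,L$; this equality is the real content of the lemma and is what the argument in \cite{R-1} establishes.  Replacing your two shortcuts with that direct verification would make the proof self-contained.
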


\begin{theorem}
\label{Compl. irreducible ideals} (Theorem 5.5, \cite{MR}) Let $E$ be an
arbitrary graph and $L:=L_{K}(E)$. A proper ideal $I$ of $L$ is completely
irreducible if and only if one of the following conditions hold:

(a) $I=I(H,S)$ is a graded ideal which is strongly primitive;

(b) $I=P^{n}$ for some non-graded prime ideal $P$ and positive integer $n$.
\end{theorem}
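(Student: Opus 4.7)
The strategy is to prove both directions, splitting the ``only if'' direction on whether $I$ is graded. The ``if'' direction handles (a) and (b) via different mechanisms; the ``only if'' direction analyzes the smallest non-zero ideal $M$ of $L/I$ (which exists by the assumption that $I$ is completely irreducible).

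For the ``if'' direction, case (a): when $I = I(H,S)$ is strongly primitive, $L/I \cong L_K(E')$ with $E' := E\backslash(H,S)$ downward directed, satisfying Condition~(L), and the strong CSP. By Lemma~\ref{Zorn ring}, every non-zero ideal of $L/I$ contains a non-zero idempotent and hence a vertex, so its vertex-intersection is a non-empty hereditary saturated subset. Strong CSP places the distinguished countable set $S'$ inside every such subset, so $S'$ lies in every non-zero ideal of $L/I$, giving $\bigcap_{J \supsetneq I} J \supsetneq I$. For case (b), $P^n = I(H,B_H) + \langle p(c)^n \rangle$ by idempotency of graded ideals. Modulo $I(H,B_H)$ the cycle $c$ has no exits, and the cycle corner is Morita equivalent to $K[x,x^{-1}]$. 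Since this is a PID, non-graded ideals above $\langle p(c)^n \rangle$ form the chain $\{\langle p(c)^k \rangle\}_{1 \le k < n}$, while graded ideals above $\langle p(c)^n \rangle$ must contain $c^0$ and hence $P$. Either way, every ideal strictly above $P^n$ contains $P^{n-1}$, making $P^{n-1}$ the unique minimal cover.

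For the ``only if'' direction, let $M$ be the smallest non-zero ideal of $L/I$. If $I$ is graded, I verify strong primitivity of $E'$ in three steps. Downward directedness: two vertices $u,v$ with no common descendant have disjoint hereditary saturated closures $H_u, H_v$, so $I(H_u,\emptyset) \cap I(H_v,\emptyset) = 0$ while both contain $M$, forcing $M=0$, a contradiction. Condition~(L): a cycle without exit would yield the chain $\langle p(c)^n \rangle$ of non-zero ideals with trivial intersection (again via $K[x,x^{-1}]$), contradicting $M \neq 0$. For strong CSP: Zorn (Condition~(L)) and Lemma~\ref{No vertices} together place a vertex $v$ in $M$, and minimality forces $M = \langle v \rangle$; since every non-empty hereditary saturated subset of $E'^0$ generates a non-zero graded ideal containing $v$, the vertex $v$ sits inside every non-empty hereditary saturated subset, settling the containment half of strong CSP. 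The CSP half---producing a countable subset of $M \cap E'^0$ reached by every vertex of $E'^0$---requires exploiting the cyclic generation $M = \langle v \rangle$ together with the downward directedness of $E'$ to extract a countable reachable set from the structure of $M$ as a minimal ideal in the Zorn setting.

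If $I$ is non-graded, the classification of LPA ideals writes $I = \mathrm{gr}(I) + \langle f(c) \rangle$ for some cycle $c$ without exits in the quotient $E\backslash(H,S)$ by $\mathrm{gr}(I) = I(H,S)$ and some $f \in K[x]$ with non-zero constant term. Meet-irreducibility (a consequence of complete irreducibility) combined with unique factorization in $K[x,x^{-1}]$ forces $f$ to be a power of an irreducible polynomial $p$, giving $I = P^n$ with $P = \mathrm{gr}(I) + \langle p(c) \rangle$ a non-graded prime. The main obstacle throughout is the CSP half of the strong CSP verification in the graded ``only if'' case: extracting a genuinely countable subset of $M \cap E'^0$ that every vertex reaches via a path. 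The other steps---downward directedness, Condition~(L), and the non-graded classification---reduce to comparatively direct intersection/chain or PID arguments, but the countable-reachability condition must truly exploit minimality of $M$ and the structural properties of the restriction of $E'$ to $M \cap E'^0$.
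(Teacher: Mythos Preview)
This theorem is not proved in the present paper at all: it is simply quoted as Theorem~5.5 of \cite{MR}, with no argument given here. So there is no ``paper's own proof'' to compare your proposal against, and any assessment has to be on the internal merits of your outline.

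Your outline is mostly sound where it is complete. In the ``if'' direction, case~(a), the step ``contains a non-zero idempotent and hence a vertex'' is not quite right as stated; the clean way is to invoke Lemma~\ref{No vertices} directly (a non-zero ideal with no vertices forces a cycle without exits, contradicting Condition~(L)), which you do correctly later. Case~(b) is fine. In the ``only if'' direction, your downward-directedness argument is slightly loose (disjoint trees need not have disjoint saturated closures), but the conclusion is salvageable: once Condition~(L) is established, the minimal non-zero ideal $M$ is graded, so $M^{2}=M\neq 0$, and any two non-zero ideals contain $M$, whence $\{0\}$ is prime and $E'$ is downward directed. In the non-graded branch you also need to check that $\mathrm{gr}(I)=I(H,B_{H})$ (i.e.\ $S=B_{H}$), not merely $I(H,S)$, since non-graded primes have this specific form.

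The genuine gap is exactly the one you flag: the countability half of the strong CSP in the graded ``only if'' case. You have shown that $H_{M}:=M\cap(E')^{0}$ is the least non-empty hereditary saturated subset and that every vertex of $E'$ reaches $H_{M}$ (via downward directedness with $v$), but you have not produced a \emph{countable} subset of $H_{M}$ that every vertex reaches. Neither $\{v\}$ nor $T(v)$ obviously works: $\{v\}$ because downward directedness only gives $u\geq w$ for some $w\leq v$, not $u\geq v$; and $T(v)$ because it need not be countable. This is precisely the step that requires the substantive argument from \cite{MR}, and your proposal does not supply it. Until that is filled in, the proof is incomplete.
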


\textbf{Note}: Since $A^{2}=A$ for any graded ideal of $L$, Remark
\ref{Str. primitive} implies that Theorem \ref{Compl. irreducible ideals} can
be rephrased as follows: An ideal $I$ \ of $L$ is completely irreducible if
and only if $I=P^{n}$ for some strongly primitive ideal $P$ and positive
integer $n$.

The next theorem gives a characterization of the locally closed prime ideals
in a Leavitt path algebra.

\begin{theorem}
\label{loc. closed <=>Strong. primitive} Let $E$ be an arbitrary graph and
$L:=L_{K}(E)$. Then the following properties are equivalent for a prime ideal
$P$ of $L$:

(a) $P$ is locally closed;

(b) $P\ $is completely irreducible;

(c) There is an element $a\in L$ such that $P$ is maximal with respect to the
property that $a\notin P$;

(d) \ $P\ $is strongly primitive.
\end{theorem}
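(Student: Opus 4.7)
The plan is to show the four-way equivalence by proving (b)$\Leftrightarrow$(c), (b)$\Leftrightarrow$(d), the trivial (b)$\Rightarrow$(a), and the substantive (a)$\Rightarrow$(d). The reformulation (b)$\Leftrightarrow$(c) is a direct unpacking of the definition of completely irreducible: an element of $(\bigcap_{J\supsetneq P}J)\setminus P$ is precisely an element $a\notin P$ such that every ideal properly containing $P$ contains $a$, i.e., $P$ is maximal with respect to $a\notin P$. For (b)$\Leftrightarrow$(d) I would apply Theorem~\ref{Compl. irreducible ideals} together with the note following it: if $P$ is a prime and completely irreducible then $P=Q^{n}$ for some strongly primitive $Q$, and primeness of $P$ combined with $Q^{n}\subseteq P\subseteq Q$ forces $P=Q$; conversely, a strongly primitive prime $P$ is completely irreducible by Theorem~\ref{Compl. irreducible ideals}(a) in the graded case and by Theorem~\ref{Compl. irreducible ideals}(b) with $n=1$ in the non-graded case. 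The implication (b)$\Rightarrow$(a) is immediate since primes form a subclass of all ideals.

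The bulk of the work is (a)$\Rightarrow$(d). I would case-split using Theorem~\ref{Prime ideals of LPA}. For the types (ii) and (iii) primes $I(H,B_{H}\backslash\{u\})$ and $I(H,B_{H})+\langle p(c)\rangle$, Remark~\ref{Str. primitive} already delivers strong primitivity. The substantive case is the type~(i) graded prime $P=I(H,B_{H})$ with $E^{0}\backslash H$ downward directed. Passing to $L/P\cong L_{K}(E\backslash(H,B_{H}))$ and renaming, I may assume $P=0$ and $E$ downward directed; under the hypothesis $I:=\bigcap_{Q\neq 0,\,\text{prime}}Q\neq 0$, I must show $E$ satisfies Condition~(L) and the strong CSP.

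I would first derive primitivity of $0$ from the semiprimitivity of Leavitt path algebras: since $J(L_{K}(E))=0$, if $0$ were not primitive every primitive ideal would be nonzero, whence $0=J(L)\supseteq\bigcap_{Q\text{ prime},Q\neq 0}Q=I$, contradicting $I\neq 0$. Theorem~\ref{Primitive ideals of LPAs}(c) then yields Condition~(L) and the CSP; fix a countable witness $S_{0}\subseteq E^{0}$. Next, for any nonempty hereditary saturated $H\subseteq E^{0}$ the quotient $L_{K}(E)/I(H,B_{H})\cong L_{K}(E\backslash(H,B_{H}))$ is a Leavitt path algebra, hence semiprime, so $I(H,B_{H})$ equals its own prime radical in $L_{K}(E)$; since all primes containing $I(H,B_{H})$ are nonzero they contain $I$, yielding $I\subseteq I(H,B_{H})$ and hence $I\cap E^{0}\subseteq H$. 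Moreover $I\cap E^{0}\neq\emptyset$, for otherwise Lemma~\ref{No vertices} would place $I=\langle f(c)\rangle$ for a cycle $c$ without exits in $E$, contradicting Condition~(L). Setting $\widehat{H}_{0}:=\bigcap_{\emptyset\neq H\text{ her.\ sat.}}H$, we obtain $\emptyset\neq I\cap E^{0}\subseteq\widehat{H}_{0}$.

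Finally, I would upgrade CSP to strong CSP. For each $s\in S_{0}$, downward directedness of $E$ produces a common lower bound $x_{s}$ of $s$ and an arbitrary fixed vertex of $\widehat{H}_{0}$, and hereditariness of $\widehat{H}_{0}$ forces $x_{s}\in\widehat{H}_{0}$. The countable set $S:=\{x_{s}:s\in S_{0}\}$ sits in $\widehat{H}_{0}$, hence inside every nonempty hereditary saturated subset, and every $u\in E^{0}$ reaches $S$ by concatenating the path $u\geq s\in S_{0}$ with the path $s\geq x_{s}$; this is the strong CSP. I expect the main obstacle to be this last block of three arguments run in concert — deducing primitivity of $0$ from the semiprimitivity of $L_{K}(E)$, locating $\widehat{H}_{0}$ via semiprimeness of the graded ideals $I(H,B_{H})$, and then converting the CSP witness into one inside $\widehat{H}_{0}$ using downward directedness.
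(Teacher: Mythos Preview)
Your proof is correct, and the equivalences (b)$\Leftrightarrow$(c), (b)$\Leftrightarrow$(d), (b)$\Rightarrow$(a) match the paper's treatment. The substantive implication, however, is handled differently. The paper proves (a)$\Rightarrow$(b) rather than (a)$\Rightarrow$(d): after reducing to $P=0$ and deducing primitivity (hence Condition~(L)) exactly as you do, it invokes Lemma~\ref{Zorn ring} to extract a nonzero idempotent $u$ from $I=\bigcap_{Q\neq 0,\,\text{prime}}Q$, and then runs a Zorn's-lemma maximality argument to show that any nonzero ideal not containing $u$ would sit inside a nonzero prime ideal missing $u$, a contradiction; thus $u$ lies in every nonzero ideal and $\{0\}$ is completely irreducible. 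Your route instead stays on the graph side: you use Lemma~\ref{No vertices} (rather than Lemma~\ref{Zorn ring}) to force $I\cap E^{0}\neq\emptyset$, use semiprimeness of each $I(H,B_{H})$ to place $I\cap E^{0}$ inside $\widehat{H}_{0}=\bigcap_{\emptyset\neq H}H$, and then push the CSP witness $S_{0}$ down into $\widehat{H}_{0}$ via downward directedness. The paper's argument is slightly shorter and never unpacks the strong CSP condition (it reaches (d) only through Theorem~\ref{Compl. irreducible ideals}); yours is more explicit, producing a concrete strong-CSP witness set, and it trades the ring-theoretic Zorn/idempotent step for the graph-theoretic observation that graded ideals of Leavitt path algebras are intersections of primes.
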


\begin{proof}
(a) =
$>$
(b). Suppose $P$ is a locally closed prime ideal of $L$. If $P$ is a
non-graded prime ideal or is a graded prime ideal of the form $P=I(H,B_{H}%
\backslash\{u\})$, then $P$ is completely irreducible by Theorem
\ref{Compl. irreducible ideals} and Remark \ref{Str. primitive}. So assume
that $P$ is a graded prime ideal of the form $P=I(H,B_{H})$. Since the
Jacobson radical of $L/P\cong L_{K}(E\backslash(H,B_{H}))$ is zero, the
intersection of all the primitive ideals in $L/P$ is $\{0\}$. As $\{0\}$ is
locally closed in $L/P$, we conclude that $\{0\}$ is necessarily a primitive
ideal of $L/P\cong L_{K}(E\backslash(H,B_{H}))$. In particular, $E\backslash
(H,B_{H})$ is downward directed and satisfies Condition (L). Let $I\neq\{0\}$
be the intersection of all the non-zero prime ideals of $L/P\cong
L_{K}(E\backslash(H,B_{H}))$. Since $E$ satisfies Condition (L), Lemma
\ref{Zorn ring} implies that $I$ contains a non-zero idempotent, say $u$. We
claim that every non-zero ideal of $L_{K}(E\backslash(H,B_{H}))$ contains $u$.
\ Because, if there is a non-zero ideal $A$ of $L_{K}(E\backslash(H,B_{H}))$
which does not contain $u$, then Zorn's Lemma will give rise to a non-zero
ideal $P$ of $L_{K}(E\backslash(H,B_{H}))$ maximal with respect to the
property that $u\notin P$. We claim that this ideal $P$ is a prime ideal. To
see this, suppose $a,b$ are elements of $L$ such that $a\notin P$ and $b\notin
P$. Since $u$ belongs to $P+RaR$ and $P+RbR$, we have%
\[
u=u^{2}\in(P+RaR)(P+RbR)=P^{2}+PRbR+RaRP+RaRbR\subseteqq P+R(aRb)R.
\]
As $u\notin P$, $aRb\nsubseteqq P$. Thus $P$ is a prime ideal and so
$P\supseteqq I\backepsilon u$, a contradiction. Consequently, the idempotent
$u$ belongs to every non-zero ideal of $L_{K}(E\backslash(H,B_{H}))$. This
shows that $\{0\}$ is completely irreducible in $L_{K}(E\backslash
(H,B_{H}))\cong L/P$ and hence $P$ is completely irreducible\ in $L$.

(b) =
$>$
(a) \ Because, a completely irreducible prime ideal is, in particular, locally
closed by definition.

Now (b)
$<$
=
$>$
(c) easily follows from the definition of complete irreducibility and (d)
$<$
=
$>$
(b) from Theorem \ref{Compl. irreducible ideals} \ and Remark
\ref{Str. primitive}.
\end{proof}

\begin{remark}
From the proof of (a) =
$>$
(b) in the above theorem, it is clear that if a graded ideal $I$ of a Leavitt
path algebra is not the intersection of all the prime ideals properly
containing $I$, then $I$ itself must be a prime ideal.
\end{remark}

The following example shows that, in general, a primitive ideal of $L_{K}(E)$
need not be strongly primitive.

\begin{example}
\label{Primitive need not be str. primitive}Consider the following
\textquotedblleft$%
\mathbb{N}
\times%
\mathbb{N}
$-Lattice\textquotedblright\ graph $E$ where the vertices in $E$ are points in
the first quadrant of the coordinate plane whose coordinates are integers
$\geq0$. Specifically, $E^{0}=\{(m,n):m,n\in%
\mathbb{Z}
$ with $m,n\geq0\}$. Every vertex $(m,n)$ emits two edges connecting $(m,n)$
with $(m+1,n)$ and $(m,n+1)$.%

\[%
\begin{array}
[c]{cccccc}%
\vdots &  & \vdots &  & \vdots & \\
\uparrow &  & \uparrow &  & \uparrow & \\
\bullet_{(0,2)} & \longrightarrow & \bullet_{(1,2)} & \longrightarrow &
\bullet_{(2,2)} & \rightarrow\cdot\cdot\cdot\\
\uparrow &  & \uparrow &  & \uparrow & \\
\bullet_{(0,1)} & \longrightarrow & \bullet_{(1,1)} & \longrightarrow &
\bullet_{(2,1)} & \rightarrow\cdot\cdot\cdot\\
\uparrow &  & \uparrow &  & \uparrow & \\
\bullet_{(0,0)} & \longrightarrow & \bullet_{(1,0)} & \longrightarrow &
\bullet_{(2,0)} & \rightarrow\cdot\cdot\cdot
\end{array}
\]

Now $E^{0}$ is downward directed and $E$ contains no cycles and hence
trivially satisfies Condition (L). Since $E^{0}$ is countable, $E^{0}$
satisfies the CSP (for example, with respect to set $\{(m.n):m\geq1,n\geq1\}$.
Thus the ideal $\{0\}$ is a primitive ideal. But $\{0\}$ is not strongly
primitive. To justify this, it is enough if we could find a set of hereditary
saturated subsets of $E^{0}$ whose intersection is the empty set. Indeed, for
each $n\geq1$, the sets $H_{n}=\{(i,j):i\geq0,j\geq n\}$ are hereditary
saturated subsets of $E^{0}$ whose intersection is the empty set.
\end{example}

In contrast to Example \ref{Primitive need not be str. primitive}, we show
that, if $E$ is a finite graph, then the primitive ideals\ of $L_{K}(E)$ are
always strongly primitive. In the proof, we will use the following Definition
and Lemma \ref{Lemma 6}.

\begin{definition}
( \cite{AAS}) A cycle $c$ in a graph $E$ is said to be an extreme cycle if $c$
has exits and, for every path $\alpha$ starting at a vertex in $c^{0}$, there
is a path $\beta$ such that $s(\beta)=r(\alpha)$ and $r(\alpha\beta)\in c^{0}%
$. Intuitively, every path that leaves the cycle $c$ can be extended to a
longer path that ends at a vertex on $c$.
\end{definition}

\begin{lemma}
\label{Lemma 6}(\cite{AAS}, Lemma 3.7.10) Let $E$ be a finite graph. Then
every vertex in $E$ connects to a sink or a no exit cycle or an extreme cycle.
\end{lemma}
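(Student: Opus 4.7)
The plan is to reduce to the behaviour inside a terminal strongly connected component of $E$, using the fact that $E$ is finite. Specifically, form the condensation $\hat{E}$ of $E$ by contracting each strongly connected component (SCC) to a single vertex; since $E$ is finite, $\hat{E}$ is a finite directed acyclic graph, so from the SCC containing the given vertex $v$ one can reach a \emph{terminal} SCC $T$ (one with no outgoing edges in $\hat{E}$). Lifting this path in $\hat{E}$ to a path in $E$ shows that $v$ connects to some vertex of $T$, so it suffices to produce the desired structure (a sink, a no exit cycle, or an extreme cycle) inside $T$. Crucially, since $T$ is terminal in $\hat{E}$, every edge of $E$ whose source lies in $T$ also has its range in $T$.

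Next I would divide into cases according to the shape of $T$. If $T$ is a single vertex $w$ emitting no edge, then $w$ is a sink. If $T=\{w\}$ and $w$ emits edges, they must all be self-loops (since $T$ is terminal); a single self-loop is a cycle without exits, while two or more self-loops give a length-one cycle that is extreme, because each exit is itself a self-loop and hence a path of length one back to $c^{0}=\{w\}$. If $|T|\geq 2$, then strong connectivity of $T$ guarantees the existence of a cycle $c$ contained in $T$. If $c$ has no exits we are done; if $c$ has an exit $f$, then $r(f)\in T$, and for any path $\alpha$ starting on $c^{0}$ the range $r(\alpha)$ still lies in $T$, so by strong connectivity there is a path $\beta$ from $r(\alpha)$ to some vertex of $c^{0}$, witnessing that $c$ is extreme.

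The main obstacle, and the reason one has to argue through a terminal SCC rather than an arbitrary one, is that the notions of exit and of being extreme are defined relative to the whole graph: an exit at $s(e_i)$ could, \emph{a priori}, leave the subgraph one is analysing. Terminality in the condensation is what ensures that the local analysis inside $T$ matches the global picture, so that a cycle with no exits in $T$ really is a cycle without exits in $E$, and every exit that lands back in $T$ can always be extended to $c^{0}$ by strong connectivity. The residual subtlety is the single-vertex terminal SCC with several self-loops, which has to be classified as extreme rather than as a no exit cycle; here one just observes that every outgoing edge is trivially a path of length one returning to $w$.
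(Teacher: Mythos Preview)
Your argument is correct. The condensation/terminal-SCC reduction is sound: terminality guarantees that every edge with source in $T$ has range in $T$, so the ``no exit'' and ``extreme'' conditions checked inside $T$ coincide with the global ones in $E$; and your case split on $|T|=1$ (sink, single self-loop, multiple self-loops) versus $|T|\geq 2$ covers everything, with strong connectivity supplying both the existence of a cycle and the return path needed for extremity.

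There is nothing to compare against in the present paper, however: the lemma is not proved here but simply imported from \cite{AAS}, Lemma~3.7.10, and used as a black box in the proof of the subsequent Proposition. Your write-up therefore supplies a self-contained justification where the paper gives only a citation. The approach you chose is essentially the standard one and is close in spirit to how the result is established in \cite{AAS}, which also proceeds by repeatedly following edges in a finite graph until one lands in a terminal configuration.
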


\begin{proposition}
Let $E$ be a finite graph. Then every primitive ideal $P$ of $L=L_{K}(E)$ is
strongly primitive.
\end{proposition}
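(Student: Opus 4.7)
The plan is to reduce to the single nontrivial case via the classification of primitive ideals, and then exploit finiteness together with downward directedness to produce a common ``small'' set sitting inside every non-empty hereditary saturated subset of the quotient graph.

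First I would invoke Theorem \ref{Primitive ideals of LPAs} to split into the three possible types of primitive $P$. In the non-graded case (a) and the graded case (b), where $P = I(H, B_H \setminus \{u\})$, Remark \ref{Str. primitive} shows directly that $P$ is already strongly primitive, with no hypothesis on $E$ required at all. The only case that genuinely uses the finiteness of $E$ is (c): $P = I(H, B_H)$ with $F := E \setminus (H, B_H)$ downward directed and satisfying Condition (L), where the plain CSP holds trivially because $F^0 \subseteq E^0$ is finite. What remains is to promote CSP to strong CSP for $F$.

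To exhibit the required countable set, I would let
\[
S = \bigcap \{T \subseteq F^0 : T \text{ is non-empty, hereditary, and saturated}\}.
\]
Since $F^0$ is finite, only finitely many such $T$ exist, and since arbitrary intersections of hereditary saturated sets are again hereditary saturated, $S$ is itself hereditary and saturated. The key step is showing $S \neq \emptyset$, and this is where downward directedness enters: if $T_1, T_2$ are non-empty hereditary saturated subsets of $F^0$, pick $u \in T_1$ and $v \in T_2$, use downward directedness to find $w \in F^0$ with $u \geq w$ and $v \geq w$, and conclude $w \in T_1 \cap T_2$ by hereditariness. A finite induction then gives $S \neq \emptyset$. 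Finally, $S$ is CSP-separating: for any $u \in F^0$ and any fixed $t \in S$, a common descendant $w$ of $u$ and $t$ lies in $S$ (again by hereditariness) and satisfies $u \geq w$. This establishes the strong CSP for $F$, so $P$ is strongly primitive.

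The main step --- and really the only non-routine ingredient --- is the pairwise intersection observation that converts downward directedness into a genuine meeting of hereditary saturated subsets. I do not expect to need Lemma \ref{Lemma 6} or Condition (L) directly for this route; finiteness plus downward directedness alone forces the strong CSP. A more constructive alternative would be to identify $S$ explicitly as the vertex set of the unique terminal strongly connected component of $F$ (which, by Lemma \ref{Lemma 6} together with Condition (L), consists either of a single sink or of vertices lying on mutually interconnected extreme cycles), but the intersection argument is cleaner and sidesteps the case analysis.
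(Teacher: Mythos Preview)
Your argument is correct, and it takes a genuinely different route from the paper's own proof. After the common reduction to the case $P = I(H,B_H)$ with quotient graph $F$ downward directed and satisfying Condition~(L), the paper invokes Lemma~\ref{Lemma 6} and splits into two cases: if $F$ has a sink $w$, then $\{w\}$ witnesses the strong CSP; otherwise every vertex connects to an extreme cycle, downward directedness forces all such cycles to be mutually connected, and any base vertex $v$ of an extreme cycle gives the witnessing singleton $\{v\}$. Your approach instead bypasses Lemma~\ref{Lemma 6} entirely by taking $S$ to be the intersection of all non-empty hereditary saturated subsets of $F^0$, and using downward directedness plus finiteness to show $S \neq \emptyset$ via the pairwise intersection observation and a finite induction. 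This is cleaner and more abstract: it isolates the fact that for a \emph{finite} downward directed graph the strong CSP holds automatically, independent of Condition~(L) (which is still needed for the definition of strong primitivity, but not for your CSP argument). The paper's route, on the other hand, is more explicit --- it actually names a single vertex witnessing the strong CSP --- and ties the result visibly to the structural trichotomy of sinks, no-exit cycles, and extreme cycles in finite graphs. Your closing remark about the alternative via terminal strongly connected components is exactly the paper's approach.
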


\begin{proof}
Let $P$ be a primitive ideal of $L$. In view of Remark \ref{Str. primitive},
we may assume that $P$ is a graded primitive ideal of the form $P=I(H,B_{H})$.
In fact, it is enough if we assume that $P=I(H,B_{H})$ is a prime ideal such
that $E\backslash(H,B_{H})$ satisfies Condition (L). We need to show that
$(E\backslash(H,B_{H}))^{0}=E^{0}\backslash H$ satisfies the strong CSP. If
$E\backslash(H,B_{H})$ contains a sink $w$, then, by the downward directness
of $(E\backslash(H,B_{H}))^{0}$, $u\geq w$ for every $u\in(E\backslash
(H,B_{H}))^{0}$ and this implies that $(E\backslash(H,B_{H}))^{0}$ satisfies
the strong CSP with respect to $\{w\}$. Suppose $(E\backslash(H,B_{H}))^{0}$
does not contain a sink. Since $E\backslash(H,B_{H})$ satisfies Condition(L)
and since $E\backslash(H,B_{H})$ is finite, it follows from Lemma
\ref{Lemma 6} that every vertex in $(E\backslash(H,B_{H}))^{0}$ connects to an
extreme cycle. Observe that, by downward directness of the graph, any two
extreme cycles $c,d$ in $E\backslash(H,B_{H})$ are connected. Consequently, if
$c$ is an extreme cycle in $E\backslash(H,B_{H})$ based at a vertex $v$, then
$u\geq v$ for every $u\in(E\backslash(H,B_{H}))^{0}$ . It is then clear that
$(E\backslash(H,B_{H}))^{0}$ satisfies the strong CSP with respect to $\{v\}$.
This shows that $P$ is strongly primitive.
\end{proof}

The next Proposition shows that a locally closed prime ideal is rational. In
the proof, we use the following Lemma.

\begin{lemma}
\label{Prop5.1 in ABR} (Proposition 5.1, \cite{ABR-1}) Let $K$ be a field and
let $A$ be a primitive $K$-algebra. If $\dim_{K}(A)<|K|$, then\ the extended
centroid $C(A)$ is algebraic over $K$.
\end{lemma}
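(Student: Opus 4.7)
The plan is to argue by contradiction, using the classical observation that a transcendental element of a field gives rise to a family of $K$-linearly independent elements via partial fractions. Assume, toward a contradiction, that some $c\in C(A)$ is not algebraic over $K$. Since primitive rings are prime, $C(A)$ is a field, so each $c-\alpha$ is a unit in $C(A)$ for every $\alpha\in K$. The overall strategy is to produce $|K|$-many $K$-linearly independent elements visible inside $A$, contradicting $\dim_K(A)<|K|$.

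First I would check that the family $\{(c-\alpha)^{-1}:\alpha\in K\}$ is $K$-linearly independent in $C(A)$: any finite nontrivial relation $\sum\lambda_i(c-\alpha_i)^{-1}=0$ with distinct $\alpha_i\in K$ and $\lambda_i\in K^{\times}$, after multiplication by $\prod(c-\alpha_i)$, becomes a nonzero polynomial identity in $c$ over $K$, contradicting transcendence.

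Next I would transfer this independence into $A$ itself through a faithful simple representation. Because $A$ is primitive, fix a faithful simple left $A$-module $M$ and set $D:=\operatorname{End}_A(M)$, a division ring by Schur's lemma. The extended centroid embeds as a subfield of $Z(D)$ via the natural action of $[(I,f)]\in C(A)$ on $M$ given by $\sum a_im_i\mapsto\sum f(a_i)m_i$; this is defined because $IM=M$ (the non-zero submodule $IM$ must equal $M$ by simplicity and faithfulness), and is independent of the chosen representation because $f$ is an $(A,A)$-bimodule map. Fixing any $0\neq m\in M$, the $K$-independent family $\{(c-\alpha)^{-1}\}\subseteq C(A)\subseteq Z(D)$ produces a $K$-independent family $\{(c-\alpha)^{-1}m:\alpha\in K\}$ in $M$: indeed, a relation $\sum\lambda_i(c-\alpha_i)^{-1}m=0$, after clearing denominators, yields $p(c)m=0$ for some nonzero $p\in K[x]$, and since $p(c)\in D$ is an $A$-endomorphism of the simple module $M$ that kills a nonzero vector, $p(c)=0$ in $D$, contradicting the previously established independence in $C(A)$.

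Consequently $\dim_K M\geq|K|$. Since $M$ is simple and faithful, it is cyclic over $A$, giving a surjection $A\twoheadrightarrow M$ of left $A$-modules, hence of $K$-vector spaces, so $\dim_K A\geq\dim_K M\geq|K|$, contrary to hypothesis. The main technical point, and the one delicate step, will be setting up the embedding $C(A)\hookrightarrow Z(D)$: one must verify that the formula $c\cdot m:=\sum f(a_i)m_i$ is well-defined independent of the chosen presentation $m=\sum a_im_i$, that it yields an $A$-linear endomorphism of $M$, and that this endomorphism is central in $D$. This is the one place where the bimodule nature of $f$ is genuinely used; everything else reduces to Schur's lemma and the partial-fraction trick.
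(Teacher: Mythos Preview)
The paper does not supply its own proof of this lemma; it is quoted as Proposition~5.1 of \cite{ABR-1} and used as a black box in the proof of Proposition~\ref{Str. Prim => Rational}. So there is no argument in the present paper to compare yours against.

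Your proposed proof is correct and is the standard Amitsur--Nullstellensatz style argument. The one step you flag as delicate, the embedding $C(A)\hookrightarrow Z(D)$ for $D=\operatorname{End}_A(M)$, does go through just as you outline. Writing $m=\sum a_im_i$ with $a_i\in I$ and setting $\tilde f(m)=\sum f(a_i)m_i$, well-definedness follows because for every $x\in I$ one has
\[
x\cdot\Bigl(\sum f(a_i)m_i\Bigr)=\sum f(xa_i)m_i=\sum f(x)\,a_i\,m_i=f(x)\cdot\sum a_im_i,
\]
the first equality using left $A$-linearity of $f$ and the second using right $A$-linearity; hence two presentations of the same $m$ yield elements whose difference is annihilated by the nonzero ideal $I$, and faithfulness plus simplicity of $M$ force that difference to vanish. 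Left $A$-linearity of $\tilde f$, centrality in $D$, and injectivity of $[(I,f)]\mapsto\tilde f$ are then routine. The remaining steps (partial-fraction independence of $\{(c-\alpha)^{-1}\}$, Schur's lemma to kill $p(c)$, and $\dim_K A\ge\dim_K M$ via cyclicity) are unproblematic.
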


\begin{proposition}
\label{Str. Prim => Rational} Let $E$ be an arbitrary graph and let $P$ be a
prime ideal of $L=L_{K}(E)$. If $P$ is locally closed, then $P$ is rational.
\end{proposition}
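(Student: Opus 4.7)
By Theorem \ref{loc. closed <=>Strong. primitive}, the hypothesis that $P$ is locally closed is equivalent to $P$ being strongly primitive, so $A := L/P$ is a primitive $K$-algebra; and by Theorem \ref{Primitive ideals of LPAs} together with Remark \ref{Str. primitive}, $P$ falls into one of three cases: (a) $P = I(H,B_H) + \langle p(c)\rangle$ non-graded; (b) $P = I(H, B_H\backslash\{u_0\})$ graded; or (c) $P = I(H, B_H)$ graded, with $E\backslash(H,B_H)$ satisfying Condition (L), downward directedness, and the strong CSP. Moreover the proof of Theorem \ref{loc. closed <=>Strong. primitive} produces a non-zero idempotent $u \in A$ lying in every non-zero two-sided ideal of $A$, so the smallest non-zero ideal of $A$ is $M = AuA$.

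Any element of $C(A)$ is represented by an $(A,A)$-bimodule homomorphism $f$ on a non-zero ideal of $A$; restricting to $M$ and using the bimodule property forces $f(u) \in uAu$, with $f$ on $M$ determined by the single value $f(u)$. A consistency check using different decompositions of elements of $AuA$ shows that $f(u)$ must lie in the center $Z(uAu)$. Consequently the internal characterization of the extended centroid yields an embedding $C(A) \hookrightarrow Z(uAu)$, so it suffices to show $Z(uAu)$ is algebraic over $K$. In cases (a) and (b) this is immediate for a canonical choice of $u$: in case (b), $u$ is the sink $u_0'$ of the quotient graph and $uAu \cong K$; in case (a), $u$ is a vertex $v \in c^0$ on the no-exit cycle and $vAv \cong K[x,x^{-1}]/(p(x))$, a finite field extension of $K$.

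The main obstacle is case (c), where no sink or no-exit cycle is available and $uAu$ need not be commutative. Here I would invoke Lemma \ref{Prop5.1 in ABR}. The strong CSP supplies a countable hereditary saturated subset $H_0 \subseteq (E\backslash(H,B_H))^0$ contained in every non-empty hereditary saturated subset, and for $u$ chosen to be a vertex of $H_0$ the corner $uAu$ is generated over $K$ by the closed paths at $u$, all of whose vertices lie in the saturated hereditary closure of $\{u\}$ inside $H_0$. A Morita-type reduction to a countable-dimensional primitive subalgebra of $A$ having the same extended centroid, combined with Lemma \ref{Prop5.1 in ABR}, yields $Z(uAu)$ algebraic over $K$. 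Converting the combinatorial strong CSP into the strict dimension bound required by Lemma \ref{Prop5.1 in ABR}, particularly when $K$ itself is countable, is the technical heart of the argument.
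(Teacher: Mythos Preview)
Your reduction $C(A)\hookrightarrow Z(uAu)$ via the minimum non-zero ideal $M=AuA$ is correct, and your direct computations in cases (a) and (b) are fine: in those cases a canonical idempotent makes $uAu$ a finite field extension of $K$ or $K$ itself. The genuine gap is case (c), and you have already flagged it yourself in the last sentence.

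The problem is that the strong CSP is a statement about a countable set $S$ of \emph{vertices}, not about dimensions of algebras. Even after choosing $u$ to be a vertex $v\in S$, the corner $vAv$ is spanned by all $\alpha\beta^{*}$ with $s(\alpha)=s(\beta)=v$, and its dimension is governed by the tree $T(v)=\{w:v\geq w\}$, which the strong CSP does not force to be countable. Your proposed ``Morita-type reduction to a countable-dimensional primitive subalgebra with the same extended centroid'' is therefore not justified, and I do not see how to extract it from the combinatorics available. In particular, when $K$ is countable the hypothesis $\dim_{K}(\,\cdot\,)<|K|$ of Lemma~\ref{Prop5.1 in ABR} simply cannot be met for any infinite-dimensional algebra, so no amount of Morita reduction over $K$ will rescue the argument.

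The paper's proof bypasses this obstacle by a base-change trick rather than by shrinking the algebra. One passes to a purely transcendental extension $K'$ of $K$ with $|K'|>\max\{|K|,|E|\}$; since primitivity of $P$ is characterized by graph-theoretic conditions (Theorem~\ref{Primitive ideals of LPAs}) which do not depend on the field, $P'=P\otimes_{K}K'$ remains primitive in $L_{K'}(E)$, and now $\dim_{K'}(L_{K'}(E)/P')\leq |E|\cdot\aleph_{0}<|K'|$ holds by construction. Lemma~\ref{Prop5.1 in ABR} then gives $C(R')$ algebraic over $K'$, and an explicit embedding $C(R)\otimes_{K}K'\hookrightarrow C(R')$ lets one descend algebraicity to $C(R)$ over $K$. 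This argument is uniform in $P$, so the case split (a)/(b)/(c) is not needed at all. The moral: enlarge the field, not shrink the algebra.
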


\begin{proof}
The proof is a minor modification of the one given in the proof of (1) =
$>$
(2) of Theorem 1.2 in \cite{ABR-1}. Suppose that $P$ is locally closed. By
Theorem \ref{loc. closed <=>Strong. primitive}, $P$ is strongly primitive and,
in particular, primitive. Let $\ K^{\prime}$ be an uncountable purely
transcendental field extension of $K$ having transcendence degree
$>\max\{|K|,|E|\}$. Note that $L_{K^{\prime}}(E)=L_{K}(E)\otimes_{K}K^{\prime
}$ and $P^{\prime}=P\otimes_{K}K^{\prime}$ is a primitive ideal of
$L_{K^{\prime}}(E)$. Let $R=L_{K}(E)/P$ and $R^{\prime}=L_{K^{\prime}%
}(E)/P^{\prime}$. We now have an injective homomorphism $\phi:C(R)\otimes
_{K}K^{\prime}\longrightarrow C(R^{\prime})$ defined as follows: If $I$ is a
nonzero ideal of $R$ and $f:I\rightarrow R$ is a bimodule homomorphism, then,
if $[(I,f)]\in C(R)$ is the equivalence class associated to $f$, we define
$\phi([I,f)]$ to be the class $[I^{\prime}.f^{\prime}]$ where $f^{\prime
}=f\otimes id:I\otimes_{K}K^{\prime}\longrightarrow R\otimes_{K}K^{\prime
}=R^{\prime}$. Since $R^{\prime}$ is primitive and $\dim_{K^{\prime}}%
R^{\prime}<|K^{\prime}|$ (Note that $\dim_{K}L_{K}(E))\leq|E|\aleph_{0}$ for
any field $K$), we conclude that $C(R^{\prime})$ is algebraic over $K^{\prime
}$, by Proposition \ref{Prop5.1 in ABR}. Given $[I,f]\in C(R)$, we see that
the bimodule homomorphism $[I^{\prime},f^{\prime}]$ is algebraic over
$K^{\prime}$. Thus there exists some non-zero ideal $J^{\prime}\subseteqq
I^{\prime}$ and some natural number $n$ such that
\[
\{(f\otimes id)^{n}|_{J^{\prime}},\cdot\cdot\cdot,(f\otimes id)|_{J^{\prime}%
},id|_{J^{\prime}}\}
\]
is linearly dependent over $K^{\prime}$. Thus $\{f^{n}|_{J},\cdot\cdot
\cdot,f|_{J},id|_{J}\}$ is linearly dependent over $K$ where $J=J^{\prime}\cap
R$. Hence $[I,f]$ is algebraic over $K$. Consequently, $C(R)$ is algebraic
over $K$, thus showing $P$ is a rational prime ideal.
\end{proof}

But, in general, a rational prime ideal need not be locally closed as the
following\ two examples show. Our first example is just the Example
\ref{Primitive need not be str. primitive} that we considered earlier. The
second example is due to Jason Bell. I am grateful to him for providing such a
nice example. In justifying the conclusions, we shall be using the following
important Proposition from \cite{AB}.

\begin{proposition}
\label{Rational prime} (Corollary 2.3, \cite{AB}) Suppose $R$ is von Neumann
regular prime algebra over a field $K$ with $\dim_{K}(R)<|K|$. Then the
extended centroid $C(R)$ of $R$ is an algebraic extension of the field $K$ and
thus $(0)$ is a rational prime ideal of $R$.
\end{proposition}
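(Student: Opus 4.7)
The plan is to reduce to Lemma~\ref{Prop5.1 in ABR}, which already delivers algebraicity of the extended centroid for \emph{primitive} $K$-algebras of small dimension. The statement of the proposition has two parts, but the ``thus'' part is automatic: since $R$ is prime, $(0)$ is a prime ideal, and the definition of rationality is precisely the algebraicity of $C(R/(0)) = C(R)$ over $K$. So the entire task is to establish that $C(R)$ is algebraic over $K$.

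My main line of attack is to show that, under the given hypotheses, $R$ is actually a (right) primitive ring, and then invoke Lemma~\ref{Prop5.1 in ABR}. Because $R$ is von Neumann regular and nonzero, it contains a nonzero idempotent $e$. Using Zorn's Lemma, choose a right ideal $M \subseteq R$ that is maximal among right ideals not containing $e$. The VNR property makes every principal right ideal a direct summand of the right ideal it generates, and a standard argument with this fact shows that $R/M$ is a simple right $R$-module. Its two-sided annihilator is a proper ideal of $R$, because it cannot contain $e$; and since $R$ is prime, a primitive ideal cannot be strictly between $(0)$ and any other nonzero ideal without forcing $(0)$ itself to annihilate the module. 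Thus the annihilator is $(0)$, so $R$ is primitive, and Lemma~\ref{Prop5.1 in ABR} gives $C(R)$ algebraic over $K$ immediately.

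If the reduction to primitivity is obstructed (in particular in non-unital or technically delicate situations), a more direct approach is available and is what I expect is used in \cite{AB}. Since $R$ is prime, $C(R)$ is a field containing $K$. Assume for contradiction that some $c \in C(R)$ is transcendental over $K$; then $K(c) \hookrightarrow C(R)$, and the standard partial-fractions argument in $K(x)$ shows that $\{(c-\alpha)^{-1} : \alpha \in K\}$ is $K$-linearly independent in $C(R)$. The goal is then to produce $|K|$ many $K$-linearly independent elements in $R$ itself, contradicting $\dim_K R < |K|$. For this, fix a nonzero idempotent $e \in R$ (available by VNR), evaluate each $(c-\alpha)^{-1}$ at $e$ through the bimodule map that represents it, and argue that the resulting family $\{(c-\alpha)^{-1}\cdot e : \alpha \in K\}$ remains $K$-linearly independent in $R$ by primeness.

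The principal obstacle in the direct approach is the simultaneous evaluation step: each $(c-\alpha)^{-1}$ is represented by a bimodule map on its own nonzero defining ideal $I_\alpha$, and when $|K|$ is uncountable there is no reason for $\bigcap_{\alpha\in K} I_\alpha$ to be nonzero. Von Neumann regularity is precisely the feature that resolves this, because it guarantees enough idempotents that an extended-centroid element can be realized consistently on a single fixed idempotent, and primeness of $R$ then promotes a vanishing linear relation in $R$ to one in $C(R)$. Whichever route one takes, the hypothesis $\dim_K R < |K|$ is used exactly once, to compare cardinalities with the index set $K$ of the partial-fraction family.
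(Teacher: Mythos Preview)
The paper does not supply its own proof of this proposition: it is quoted verbatim as Corollary~2.3 of \cite{AB} and used as a black box. So there is nothing in the present paper to compare your argument against; what matters is whether your argument stands on its own.

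Your first approach has a genuine gap. You produce a simple right module $R/M$ and then assert that its annihilator must be $(0)$ ``since $R$ is prime.'' That inference is invalid: primeness of $R$ means only that $(0)$ is \emph{a} prime ideal, not that it is the \emph{only} primitive ideal. Nothing in your construction forces $\operatorname{ann}(R/M)=(0)$. The decisive red flag is that your argument never uses the hypothesis $\dim_K R < |K|$; if it were correct it would show that \emph{every} prime von Neumann regular ring is primitive, and this is known to be false (Domanov's counterexample). The actual result of Ara--Bell in \cite{AB} does prove that $R$ is primitive, but the dimension hypothesis is essential to that proof and is used in a nontrivial way --- it is not a one-line Zorn's Lemma argument.

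Your second approach is closer in spirit to the standard cardinality argument and correctly identifies both the partial-fraction family $\{(c-\alpha)^{-1}:\alpha\in K\}$ and the genuine obstacle (the defining ideals $I_\alpha$ need not have nonzero common intersection). But the paragraph that claims von Neumann regularity ``resolves'' this is only a promissory note: you have not said how to realize all of the $(c-\alpha)^{-1}$ simultaneously on a single idempotent, nor why the resulting elements of $R$ remain linearly independent. That step is exactly the content of \cite{AB}, and it is where the work lies.
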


\begin{example}
Consider the graph $E$ of Example \ref{Primitive need not be str. primitive}.
Let $K$ be any uncountable field. Since $E$ acyclic, $L_{K}(E)$ is von Neumann
regular algebra (\cite{AR}) and since $E^{0}$ is downward directed, $L_{K}(E)$
is also a prime algebra. Thus, $\{0\}$ is a prime ideal and, as $E$ is a
countable graph, $\aleph_{0}=\dim_{K}(L_{K}(E))<|K|$. Then Proposition
\ref{Rational prime} implies that $\{0\}$ is a rational prime ideal of
$L_{K}(E)$. But $\{0\}$ is not locally closed, since $\{0\}=%
{\displaystyle\bigcap}
\{P_{n}:n>0\}$, where, for each $n>0$, $P_{n}$ is the ideal generated by the
vertex $(n,0)$ and is a prime ideal as $E^{0}\backslash(P_{n}\cap E^{0})$ is
downward directed.
\end{example}

\begin{example}
\label{rational is not locally closed} Let $E$ be a graph with\ its vertex set
$E^{0}=\{(i,j):i,j\in%
\mathbb{Q}
$, the set of rational numbers$\}$. Given $(i,j),(r,s)\in E^{0}$, there is an
edge $(i,j)\longrightarrow(r,s)$ whenever $i>r$ or $i=r$ and $j>s$.\ It is
then clear that every vertex in $E$ is an infinite emitter. Let $L:=L_{K}(E)$
where $K$ is an uncountable infinite field. For each rational number $i$, let
$H_{i}=\{(x,y)\in E^{0}:x\leq i\}$. It is clear that the set $H_{i}$ is
hereditary. It is also vacuously saturated, as every vertex in $E$ is an
infinite emitter. Also, for each $i\in%
\mathbb{Q}
$, $E^{0}\backslash H_{i}$ is downward directed. To see this, suppose
$(a,b),(c,d)\in E^{0}\backslash H_{i}$. Clearly, $a>i$ and $c>i$. Without loss
of generality, assume $a\leq c$. Then for some rational number $r<b,d$ we
have\ a vertex $(a,r)\in E^{0}\backslash H_{i}$ and edges
$(a,b)\longrightarrow(a,r)$ and $(c,d)\longrightarrow(a,r)$. This shows that
$E^{0}\backslash H_{i}$ is downward directed. Consequently, for each $i\in%
\mathbb{Q}
$, the ideal $P_{i}=<H_{i}>$ is a graded prime ideal of $L$. Moreover, it is
easy to see that $P_{i}=%
{\displaystyle\bigcap\limits_{i<j\in\mathbb{Q}}}
P_{j}$. Thus the prime ideal $P_{i}$ is not locally closed. We claim $P_{i}$
is a rational prime ideal. To see this, first note that the graph $E$ is
acyclic and so $L$ is von Neumann regular (\cite{AR}). Thus $L/P$ is a prime
von Neumann $K$-algebra and is countable dimensional since the graph $E$ has
countable number of vertices and countable number of edges. Since the field
$K$ has uncountable cardinality, we appeal to Proposition \ref{Rational prime}
to conclude that $P$ is a rational prime ideal.
\end{example}

As a corollary to the preceding analysis, we thus are able to append
additional equivalent properties to the DM-equivalence of a prime ideal in a
Leavitt path algebra of a finite graph, thus extending  Theorem 1.2 of
\cite{ABR-1}.

\begin{corollary}
\label{Finite E} Let $E$ be a finite graph and $L=L_{K}(E)$. Then the
following properties are equivalent for any prime ideal $P$ of $L$:

(i) \ \ $P$ is primitive;

(ii) \ $P$ is locally closed;

(iii) $P$ is rational;

(iv) $P$ is strongly primitive;

(v) \ $P$ is completely irreducible;

(vi) There is a non-zero element $a\in L$ such that $P$ is maximal with
respect to the property that $a\notin P$.
\end{corollary}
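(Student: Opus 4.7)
The plan is to package this corollary out of the equivalences already established in the preceding sections, so that essentially no genuinely new work is required. I would proceed in three bundles of implications.

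First, without invoking the finiteness of $E$, Theorem \ref{loc. closed <=>Strong. primitive} supplies (ii)$\Leftrightarrow$(iv)$\Leftrightarrow$(v)$\Leftrightarrow$(vi), and Proposition \ref{Str. Prim => Rational} supplies (ii)$\Rightarrow$(iii). Thus, once the remaining links are in place, all six properties will collapse to a single equivalence class.

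Second, I would use the finiteness of $E$ to obtain (i)$\Leftrightarrow$(iv). The forward direction (i)$\Rightarrow$(iv) is exactly the Proposition immediately preceding this Corollary, which asserts that primitive ideals of $L_{K}(E)$ are strongly primitive when $E$ is finite. For the converse (iv)$\Rightarrow$(i), the definition of strong primitivity forces the quotient graph $E\backslash(H,S)$ to be downward directed and to satisfy Condition (L) together with the strong CSP (which in particular implies the ordinary CSP). Theorem \ref{Primitive ideals of LPAs}, supplemented by Remark \ref{Str. primitive} for the non-graded case and the case $P=I(H,B_H\backslash\{u\})$, then certifies that $P$ is primitive.

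Third, to close the cycle I would need (iii)$\Rightarrow$(i) (or any implication feeding rationality back into the cluster). This is precisely Theorem 1.2 of \cite{ABR-1}, since the present corollary is explicitly advertised as extending that result. The main potential obstacle is only aesthetic: a self-contained proof would have to show directly that a non-primitive graded prime $P=I(H,B_H)$ in a finite graph cannot be rational, by producing a transcendental element of the extended centroid of $L/P$ --- the natural candidate being the class of $p(c)$ associated to a no-exit cycle $c$ in the quotient graph $E\backslash(H,B_H)$, which must exist when Condition (L) fails in the non-primitive case. Given the availability of \cite{ABR-1}, however, the proof of the corollary is pure assembly.
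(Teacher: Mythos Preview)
Your proposal is correct and follows essentially the same route the paper takes: the corollary is stated without a separate proof, simply ``as a corollary to the preceding analysis'' extending Theorem~1.2 of \cite{ABR-1}, and your three bundles are precisely the assembly the paper intends---Theorem~\ref{loc. closed <=>Strong. primitive} for (ii)$\Leftrightarrow$(iv)$\Leftrightarrow$(v)$\Leftrightarrow$(vi), the Proposition on finite graphs for (i)$\Rightarrow$(iv), Proposition~\ref{Str. Prim => Rational} for (ii)$\Rightarrow$(iii), and the cited result of \cite{ABR-1} to close the loop via (iii)$\Rightarrow$(i).
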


\textbf{Acknowledgement:} I am thankful to Jason Bell for many helpful
suggestions and, in particular, for providing the nice Example
\ref{rational is not locally closed}. I also thank Zak Mesyan for suggesting
to include Corollary \ref{Finite E}.


\begin{thebibliography}{99}                                                                                               %


\bibitem {AAS}G. Abrams, P. Ara and Siles Molina, Leavitt path algebras,
Lecture Notes in Mathematics, \textbf{2191}, Springer, (2017).

\bibitem {ABR-1}G. Abrams, J. Bell and K.M. Rangaswamy, The Dixmier-Moeglin
Equivalence for Leavitt path algebras, Algebra Represent. Theory, \textbf{15}
(2012), 407 - 425.

\bibitem {AR}G. Abrams and K.M. Rangaswamy, Regularity conditions for
arbitrary Leavitt path algebras, Algebra Represent. Theory, \textbf{13
}(2010), 319 - 334.

\bibitem {AB}P. Ara and J. Bell, Primitivity of Prime countable dimensional
regular algebras, Proc. Amer. Math. Soc., \textbf{143} (2015), 2759 - 2766.

\bibitem {BM}K.I. Beidar, W.S. Martindale III and A.V. Mikhalev, Rings with
generalized identities, Monographs and Textbooks in Pure and Applied
Mathematics, \textbf{196}, Marcel-Dekker (1996).

\bibitem {BRS}J. Bell, D. Rogalski and S. Sierra, The Dixmier-Moeglin
equivalence for twisted homogeneous coordinate rings, Israel J. Math.,
\textbf{180} (2010), 461 - 507.

\bibitem {D}J. Dixmier, Enveloping Algebras, The 1996 printing of the 1977
English translation, Graduate Studies in Math., \textbf{11 (}1996\textbf{),
}Amer. Math. Soc., Providence.

\bibitem {FHO}L. Fuchs, W. J. Heinzer and B. Olberding, Commutative ideal
theory without finiteness conditions: Completely irreducible ideals, Trans.
Amer. Math. Soc., \textbf{358} (2006), 3113 - 3131.

\bibitem {GL}K. Goodearl and E. Letzter, The Dixmier-Moeglin equivalence in
quantum coordinate rings and quantized Weyl algebras, Trans. Amer. Math. Soc.,
\textbf{352} (2000), 1381 - 1403.

\bibitem {MR}Z. Mesyan and K.M. Rangaswamy, Products and intersections of
prime-power ideals in Leavitt path algebras, J. Algebra and Applications,
(2022), 2250104 1 -\ 26

\bibitem {MO}C. Moeglin, Id\'{e}aux primitifs des alg\`{e}bres enveloppantes,
J. Math. Pure Appl. \textbf{59} (1980), 265--336.

\bibitem {R-1}K.M. Rangaswamy, The theory of prime ideals of Leavitt path
algebras over arbitrary graphs, J. Algebra \textbf{375} (2013), 73 - 96.

\bibitem {R}K.M. Rangaswamy, Leavitt path algebras which are Zorn rings,
Contemp. Math. \textbf{609} (2014), 277 - 283.

\bibitem {T}M. Tomforde, Uniqueness theorems and ideal structure of Leavitt
path algebras, J. Algebra, \textbf{318} (2007), 270-299.
\end{thebibliography}
\end{document}